\newif\if@restonecol
\newcommand{\bmat}{\begin{bmatrix}}
\newcommand{\emat}{\end{bmatrix}}
\newcommand{\real}{{\mathbb{R}}}
\newcommand{\edit}[1]{{#1}}
\newenvironment{proof}%
  {\slshape Proof:~~\normalfont}
  {\hfill$\lhd$\normalfont\newline}
\begin{document}

\begin{frontmatter}

\title{Efficient Desynchronization of Thermostatically Controlled Loads} 

\thanks[footnoteinfo]{The first author was supported in part by the Otto M{\o}nsted and Dir. Ib Henriksen foundations, Denmark; The second author was partially supported by AFOSR.}

\author[First]{Jan Bendtsen\thanksref{footnoteinfo}} 
\author[Second]{Srinivas Sridharan} 

\address[First]{Department of Electronic Systems, Automation and Control, Aalborg University, Denmark (e-mail: dimon@es.aau.dk).}                                              
\address[Second]{Department of Mechanical and Aerospace Engineering, University of California, San Diego, California (email: srsridharan@eng.ucsd.edu) }


\begin{abstract}
This paper considers demand side management in smart power grid systems containing significant numbers of thermostatically controlled loads such as air conditioning systems, heat pumps, etc. Recent studies have shown that the overall power consumption of such systems can be regulated up and down centrally by broadcasting small setpoint change commands without significantly impacting consumer comfort. However, sudden simultaneous setpoint changes induce undesirable power consumption oscillations due to sudden synchronization of the on/off cycles of the individual units. In this paper, we present a novel algorithm for counter-acting these unwanted oscillations, which requires neither central management of the individual units nor communication between units. We present a formal proof of convergence of homogeneous populations to desynchronized status, as well as simulations that indicate that the algorithm is able to effectively dampen power consumption oscillations for both homogeneous and heterogeneous populations of thermostatically controlled loads.  
\end{abstract}

\end{frontmatter}


\section{Introduction}
\label{sec:intro}
With growing penetration of renewable energy sources in modern power grids, \emph{demand side management} has been gaining attention as a means of achieving better balancing between supply and demand (\cite{MohsenianRad:2010}, \cite{Strbac:2008}, \cite{Short:2007}). Indeed, it appears that the higher intermittency and lack of dispatchability associated with increased dependence on renewable energy sources can be taken care of more effectively by electrical loads than by conventional generators, which are typically not designed for fast up- and down regulation (\cite{Strbac:2008}, \cite{Klobasa:2010}). 

Various technologies are currently being considered in the context of demand side management; coordinated charging of batteries, e.g., in electric vehicles (\cite{Mets:2010}), deliberate scheduling of loads with flexible deadlines (\cite{Petersen:2012}) as well as allowing local consumers with slow dynamics (large time constants) to store more or less energy at convenient times and thereby adjusting the momentary consumption (e.g., \cite{Moslehi:2010}), among others. In particular, so-called \emph{thermostatically controlled loads}  (TCLs), such as deep freezers, refrigerators, local heat pumps etc., are showing great potential in this context, since they account for a large volume of consumption in most countries with significant renewable penetration---for instance, as of 2009, about 87 percent of all US homes were equipped with air conditioning.\footnote{
\url{http://www.eia.gov/consumption/residential/reports/2009/air-conditioning.cfm}}

Thus, at least in theory, manipulating the operating conditions of large populations of units slightly while avoiding discomfort to end users appears to be attractive, given that it can be achieved simply by broadcasting setpoint changes. Control strategies based on this principle were considered in \cite{Callaway:2009}, \cite{Kundu:2011}, \cite{Perfumo:2012} and \cite{Bashash:2012}, among others. In these strategies, subtle changes in the thermostat setpoint temperature  (less than a degree Celcius) are transmitted to all the participating consumers simultaneously. Such small variations in the thermostat setpoints are expected to remain almost unnoticed by the customers, but if a sufficiently large number of units are shifted in the same direction, the overall power consumption can be shifted quickly by significant amounts, allowing to compensate for power production fluctuations in the grid. 

There is an intrinsic problem with this approach, however. When the setpoint is changed concurrently in a large number of TCLs, their on/off cycles tend to become \emph{synchronized}, which leads to large unwanted fluctuations (damped oscillations) in the overall power consumption \cite{Callaway:2009}, \cite{Kundu:2011}. If the entire population of TCLs consists of identical units (\emph{homogeneous}), it is possible to counteract the fluctuations by means of a centralized control law. In the much more realistic case of non-identical (\emph{heterogeneous}) units, on the other hand, the fluctuations are far harder to remove using centralized control strategies. To the authors' knowledge, the only solution strategy presented in the literature so far is \cite{Kundu:2012}, where a strategy in which the individual TCLs deliberately slow down their transition in order to avoid synchronization was proposed.  

In this paper, we propose a novel, decentralized algorithm for avoiding synchronization without having to actively communicate between units. The algorithm proposed here is fundamentally different from the one presented in \cite{Kundu:2012}) in that it first adjusts the temperature bands in which the TCLs operate to achieve early (randomized) desynchronization, and then makes use of a technique inspired by contention-based media access protocols to adaptively adjust the on/off cycles to achieve as little synchronization among units as possible. We present a formal proof of convergence of homogeneous populations to this desynchronized status, as well as simulations that indicate that the algorithm is able to effectively dampen power consumption oscillations for both homogeneous and heterogeneous populations of thermostatically controlled loads.

The outline of the rest of the paper is as follows. Section \ref{sec:problem} first presents the simplified TCL model we shall employ, along with the synchronization issue. Section \ref{sec:algorithm} then presents the proposed algorithm along with the aforementioned convergence result. Section \ref{sec:example} illustrates the feasibility of the approach through two simulation examples. Finally, Section \ref{sec:disc} offers some concluding remarks.


\section{Problem setup}
\label{sec:problem}
We consider a population of $N$ individual temperature controlled loads. The TCLs are modeled in a generic manner, which is deliberately kept as simple as possible. Nonetheless, as will be illustrated with a few simulation examples, the behavior of a large population of simple units can be fairly complex.

\subsection{Model}
The individual TCLs are modeled as follows. Let the internal and ambient temperatures of the volumes (living spaces, refrigerators, cold storages etc.) affected by the action of the heating/cooling hardware of the $i$'th consumer be denoted $\theta_i$ and $\theta_{\infty, i}$, respectively, and assume that the hardware is purely on/off-regulated. A simple model for the internal temperature can then be formulated as \cite{Malhame:1985}, \cite{Callaway:2009}, \cite{Bashash:2012}:
\begin{eqnarray}
\dot{\theta}_i(t) &=& \frac{1}{R_i C_i}(\theta_{\infty, i} - \theta_i(t) - s_i(t)R_i P_i),  \label{eq:tcldynamics} \\
s_i(t) &=& 
\begin{cases} 
	0        &\text{if } s_i(t^-) = 1 \wedge T_i(t) \leq T_{\text{min},i} \\
	1        &\text{if } s_i(t^-) = 0 \wedge T_i(t) \geq T_{\text{max},i} \\
	s_i(t^-) & \text{otherwise}
\end{cases}   \label{eq:tclswitch}
\end{eqnarray}
for $i = 1, 2, \ldots, N$, where $C_i$ is the thermal capacitance of the consumer, $R_i$ is the corresponding thermal resistance and $P_i$ is the (constant) heating/cooling power supplied by the hardware when switched on. $s_i \in \{0,1\}$ is a binary switching variable that determines whether or not the hardware is turned on; basically, it switches status whenever the internal temperature encounters the limits of a pre-set temperature span $[\theta_{\text{min},i}, \theta_{\text{max},i}] \subset \real$ ($t^- = t - \epsilon$, where $\epsilon$ is an infinitesimal positive value). Note that both heating and cooling systems can be modeled by (\ref{eq:tcldynamics})--(\ref{eq:tclswitch}), by choosing the sign of the term $s_i(t)R_i P_i$ appropriately.

\begin{figure}
\begin{center}
\includegraphics[width=0.8\linewidth]{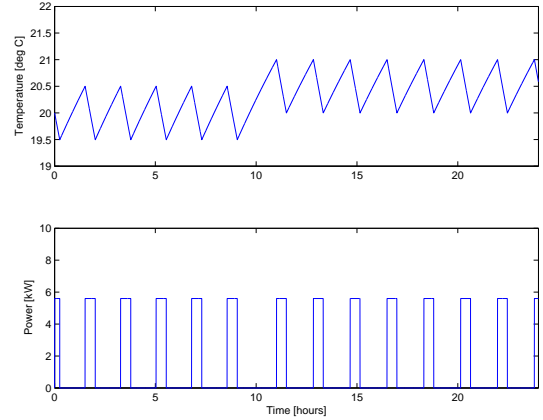}
\caption{Temperature and power consumption of a single TCL}
\label{fig:singleTCL} 
\end{center}
\end{figure}

The temperature limits $\theta_{\text{min},i}$ and $\theta_{\text{max},i}$ are related to the $i$'th consumer's setpoint $\theta_{\text{sp},i}$ through the relations
\[ \theta_{\text{min},i} = \theta_{\text{sp},i} - \frac{\Delta}{2}, \quad 
   \theta_{\text{max},i} = \theta_{\text{sp},i} + \frac{\Delta}{2}  \]
where $\Delta$ is the width of the temperature interval. Furthermore, the cumulative power consumption of the population of TCLs at any given time $t$ can be computed as
\begin{equation}
P(t) = \sum_{i=1}^N \frac{P_i s_i(t)}{\eta_i}
\end{equation}
where $\eta_i$ is the coefficient of performance for the $i$'th heating/cooling unit.

Figure \ref{fig:singleTCL} illustrates the characteristic behavior of a single TCL. The parameters are chosen as $R = 2 ^\circ$C/kW, $C = 5$ kWh$/^\circ$C, $P = 14$ kW, $\eta = 2.5$, $\theta_{\text{sp}} = 20^\circ$C, $\Delta = 1^\circ$C and $\theta_{\infty} = 28^\circ$C, respectively. The temperature setpoint is changed to $\theta_{\text{sp}} = 20.5^\circ$C at time $t=10$ h, resulting in a slightly longer period in which the cooling power is turned off, until the new operating interval is reached.

\subsection{Population behavior}
As illustrated in \cite{Callaway:2009}, \cite{Perfumo:2012}, \cite{Bashash:2012} and others, the power consumption of populations of units with the individual dynamics given above can be manipulated via broadcasts of small setpoint changes. Figure \ref{fig:homo} shows the behavior of a population of 10,000 homogeneous (identical) TCLs with the same set of parameters as in figure \ref{fig:singleTCL} (only the temperature profiles of 25 TCLs are shown in the figure). The devices are started at random temperatures  unifomly distributed within $[\theta_{\text{min}} ,  \theta_{\text{max}}]$, and their on/off cycles are thus desynchronized, yielding a fairly smooth power consumption trajectory. At time $t = 10$ h, all the TCLs are subjected to a common step in the setpoint of 0.5$^\circ$C. As can be seen, the overall power consumption drops by an amount proportional to the size of the step, but significant oscillations appear in the power consumption due to synchronization of the individual units' on/off cycles.

\begin{figure}
\begin{center}
\includegraphics[width=0.8\linewidth]{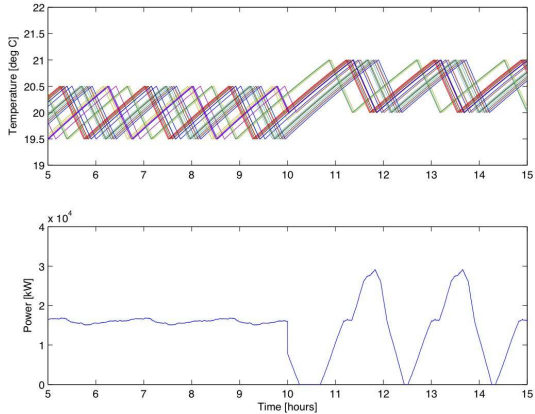}
\caption{Temperature and power consumption of a population of 10,000 homogeneous TCLs}
\label{fig:homo} 
\end{center}
\end{figure}

\begin{figure}
\begin{center}
\includegraphics[width=0.8\linewidth]{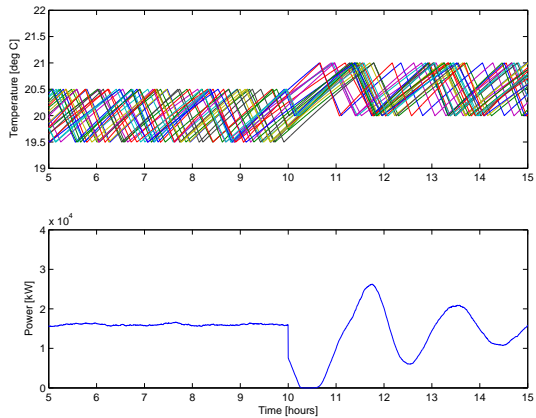}
\caption{Temperature and power consumption of a population of 10,000 heterogeneous TCLs}
\label{fig:10000TCLs} 
\end{center}
\end{figure}

Figure \ref{fig:10000TCLs} illustrates the behavior of a population of TCLs under the same simulation circumstances, but with $C_i$ chosen from a normal distribution with mean 5 and spread 0.5 kWh$/^\circ$C (again, only the temperature profiles of 25 TCLs are shown in the figure). A step of 0.5$^\circ$C in the setpoint is broadcast to all the TCLs at  time $t=10$ h. Notice how the power consumption is quite smooth before the step because the on/off cycles of the TCLs are desynchronized, whereas immediately after the step the TCLs become synchronized and the power consumption again exhibits large fluctuations, which in this case die out slowly. The oscillations die out faster if the parameters vary more (e.g., if $C_i$ is chosen from a distribution with larger spread); furthermore, the amplitude is relatively larger for larger populations of units. However, qualitatively the behavior remains the same. The oscillations are known as \emph{parasitic oscillations} and are hard to remove via centralized control signals (\cite{Kundu:2012}).

\subsection{Problem formulation}
In order to reduce the aforementioned oscillations in power consumption after the setpoint change broadcasts, it is clearly necessary to deliberately desynchronize the TCLs without interrupting their operation. However, a centralized algorithm for doing so is undesirable; it is considered infeasible to keep close track of the internal states (temperatures, set points etc.) of all the units in a centralized manner, since doing so would require regular measurement feedback from all the devices, which would give rise to a very heavy communication and computational load on the system. 

Thus, we look for a low-complexity, \emph{decentralized} algorithm  that can be implemented locally in each TCL, and which satisfies the following requirements:

\begin{enumerate}
  \item[R1] The algorithm may only use local information; this means that the TCLs may not communicate with each other, and labels identifying each individual TCL may not be pre-assigned 
  \item[R2] Communication with the power supply utility must be limited to broadcast from the utility; no communication originating from the TCLs is allowed
  \item[R3] The general operation of each individual TCL may not be altered in a manner that is detrimental to the user comfort; for example, the unit may not be deliberately kept turned off or on long enough for the temperature to leave the interval $[\theta_{\text{min}} ,  \theta_{\text{max}}]$   
\end{enumerate}

\section{Desynchronization algorithm}
\label{sec:algorithm}
In the following, we present a completely decentralized algorithm that is able to reduce the power oscillations discussed above without violating any of the requirements R1--R3. The only significant assumption we require to be satisfied is that the individual TCLs have access to time stamps of changes in the total power consumption. While this may be considered a form of global information, its usage is limited to identifying the times at which the immediate predecessor and the immediate successor of a local TCL changes state. It is not used to actually identify said units; i.e., we only require local temporal information. The information about changes in global power consumption may either be measured locally or broadcast from the utility, thus avoiding violating requirements R1 and R2.

The algorithm comprises two main components. Firstly, the algorithm is reset whenever each TCL in question receives a setpoint change $\theta_{\text{sp},i} \to \theta_{\text{sp},i} + \delta$. At this point, each TCL immediately narrows its operation interval by a random value $\alpha \in [0, \Delta/2]$, causing the dynamics of unit $i$ momentarily to be governed by the dynamics 
\begin{eqnarray*}
\dot{\theta}_i(t) &=& \frac{1}{R_i C_i}(\theta_{\infty, i} - \theta_i(t) - s_i(t)R_i P_i),  \label{eq:tcldynamics} \\
s_i(t) &=& 
\begin{cases} 
	0        &\text{if } s_i(t^-) = 1 \wedge \theta_i(t) \leq \theta_{\text{sp},i} + \delta + \frac{\Delta_i}{2}- \alpha_i \\
	1        &\text{if } s_i(t^-) = 0 \wedge \theta_i(t) \geq \theta_{\text{sp},i} + \delta - \frac{\Delta_i}{2} + \alpha_i \\
	s_i(t^-) & \text{otherwise}
\end{cases}  
\end{eqnarray*}
This operation rapidly induces a large degree of desynchronization among the population of TCLs without violating requirement R3, as illustrated in Figure \ref{fig:initialdesync}. However, randomly choosing the temperature bounds may yield very tight bounds, which in turn may give rise to constant rapid on/off-switching. This is generally not desirable, so we drive the bounds back to their original settings by introducing the dynamics
\begin{equation}
  \dot{\alpha}(t) = -a \alpha(t)
\end{equation}
for some appropriately chosen $a > 0$, as also illustrated in the figure.

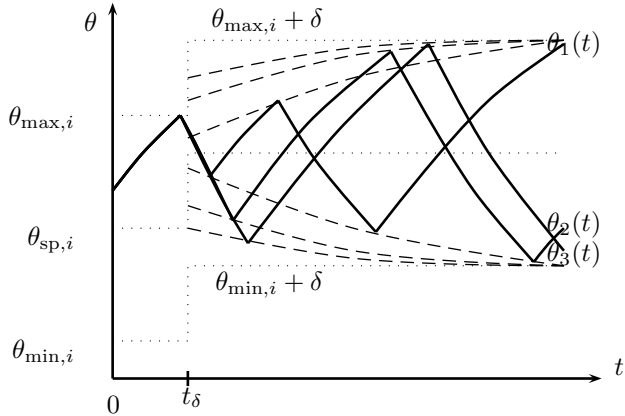
\begin{figure}
\begin{center}
\psset{unit=1mm,linewidth=1pt}
\begin{pspicture}(0,0)(80,60)
%
\psline{->}(10,5)(75,5)
\put(8,0){\makebox( 3, 3)[r]{$0$}}
\put(75,4){\makebox( 3, 5)[r]{$t$}}
\psline{-}(20,4)(20,6)
\put(19,0){\makebox( 3, 5)[r]{$t_\delta$}}
\psline{->}(10,5)(10,55)
\put(5,50){\makebox( 3, 5)[r]{$\theta$}}
\psline[linewidth=0.5pt,linestyle=dotted](10,25)(20,25)(20,35)(70,35)
\put(2,22){\makebox(3, 3)[r]{$\theta_{\text{sp},i}$}}
\psline[linewidth=0.5pt,linestyle=dotted](10,10)(20,10)(20,20)(70,20)
\put(2,7){\makebox(3, 3)[r]{$\theta_{\text{min},i}$}}
\put(2,37){\makebox( 3, 5)[r]{$\theta_{\text{max},i}$}}
\psline[linewidth=0.5pt,linestyle=dotted](10,40)(20,40)(20,50)(70,50)
\put(35,16){\makebox(3, 3)[r]{$\theta_{\text{min},i}+\delta$}}
\put(35,50){\makebox( 3, 5)[r]{$\theta_{\text{max},i}+\delta$}}
\pscurve[linewidth=0.5pt,linestyle=dashed]{-}(20,37)(42,45)(70,50)
\pscurve[linewidth=0.5pt,linestyle=dashed]{-}(20,33)(42,25)(70,20)
\pscurve[linewidth=0.5pt,linestyle=dashed]{-}(20,42)(42,48)(70,50)
\pscurve[linewidth=0.5pt,linestyle=dashed]{-}(20,28)(42,22)(70,20)
\pscurve[linewidth=0.5pt,linestyle=dashed]{-}(20,45)(42,49)(70,50)
\pscurve[linewidth=0.5pt,linestyle=dashed]{-}(20,25)(42,21)(70,20)
\pscurve[linewidth=1pt]{-}(10,30)(14,35)(19,40)
\pscurve[linewidth=1pt]{-}(19,40)(21,36.3)(23,32)
\pscurve[linewidth=1pt]{-}(23,32)(27,37)(32,42)
\pscurve[linewidth=1pt]{-}(32,42)(37,34)(45,24.5)
\pscurve[linewidth=1pt]{-}(45,24.5)(60,42)(70,49.5)
\put(72,48){\makebox( 3, 3)[r]{$\theta_1(t)$}}
\pscurve[linewidth=1pt]{-}(10,30)(14,35)(19,40)
\pscurve[linewidth=1pt]{-}(19,40)(21,36.3)(26,26)
\pscurve[linewidth=1pt]{-}(26,26)(36,39)(47,48.5)
\pscurve[linewidth=1pt]{-}(47,48.5)(58,30)(66,20.5)
\pscurve[linewidth=1pt]{-}(66,20.5)(68,23)(70,25)
\put(72,24){\makebox( 3, 3)[r]{$\theta_2(t)$}}
\pscurve[linewidth=1pt]{-}(10,30)(14,35)(19,40)
\pscurve[linewidth=1pt]{-}(19,40)(25,28)(28,23)
\pscurve[linewidth=1pt]{-}(28,23)(40,38)(52,49.5)
\pscurve[linewidth=1pt]{-}(52,49.5)(60,35)(70,22)
\put(72,20){\makebox( 3, 3)[r]{$\theta_3(t)$}}
\end{pspicture}
\end{center}
\caption{Illustration of TCL desynchronization algorithm before and after a step occurring at $t = t_\delta$. The temperature profiles $\theta_i$ of three TCLs indexed by $i = 1, 2, 3$ are shown with full lines. The dashed lines indicate the operating intervals $[\theta_{\text{sp},i} + \delta + \Delta_i/{2}- \alpha_i(t) , \theta_{\text{sp},i} + \delta - \Delta_i/{2} + \alpha_i(t)]$. Due to different random values of $\alpha_i$, the TCLs become desynchronized after the step even though they were completely synchronized before and immediately after $t_\delta$. The operation intervals converge exponentially back to their original values.}
\label{fig:initialdesync} 
\end{figure}

Now, even though most of the synchronization among the population of TCLs is likely to have disappeared with the random shrinking of the operating interval, it may still happen that some TCLs remain synchronized, or close to synchronized. The second component of the proposed algorithm is designed to deal with any `left-over' synchronization among TCLs that are close to being identical. Here, we make use of the fact that the behavior of the TCLs is periodic in steady state operation. Let the period be denoted $T$. Essentially, this part of the desynchronization algorithm repeatedly forces the devices to switch status at some point within the interval $[0,T]$, and adaptively adjusts this enforced switch timing to achieve maximal spread across the period; see Figure \ref{fig:period}.

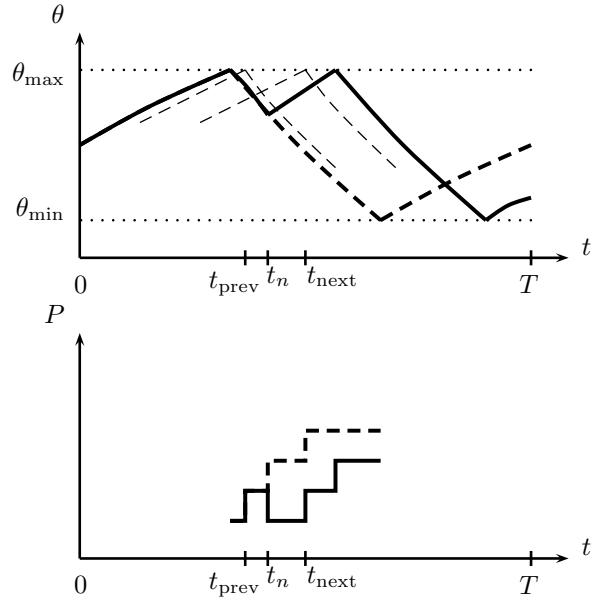
\begin{figure}
\begin{center}
\psset{unit=1mm,linewidth=1pt}
\begin{pspicture}(0,0)(80,75)
%
\psline{->}(10,45)(75,45)
\put(8,40){\makebox( 3, 3)[r]{$0$}}
\put(75,44){\makebox( 3, 5)[r]{$t$}}
\psline{-}(32,44)(32,46)
\put(31,40){\makebox( 3, 3)[r]{$t_{\text{prev}}$}}
\psline{-}(35,44)(35,46)
\put(35,40){\makebox( 3, 5)[r]{$t_n$}}
\psline{-}(40,44)(40,46)
\put(44,40){\makebox( 3, 5)[r]{$t_{\text{next}}$}}
\psline{-}(70,44)(70,46)
\put(68,40){\makebox( 3, 3)[r]{$T$}}
\psline{->}(10,45)(10,75)
\put(5,75){\makebox( 3, 5)[r]{$\theta$}}
\put(5,50){\makebox(3, 3)[r]{$\theta_{\text{min}}$}}
\put(5,67){\makebox( 3, 5)[r]{$\theta_{\text{max}}$}}
\psline[linestyle=dotted](10,50)(70,50)
\psline[linestyle=dotted](10,70)(70,70)
\pscurve[linewidth=1.5pt,linestyle=dashed]{-}(10,60)(19,65)(30,70)
\pscurve[linewidth=1.5pt,linestyle=dashed]{-}(30,70)(39,60)(50,50)
\pscurve[linewidth=1.5pt,linestyle=dashed]{-}(50,50)(59,55)(70,60)
\pscurve[linewidth=1.5pt]{-}(10,60)(19,65)(30,70)
\pscurve[linewidth=1.5pt]{-}(30,70)(32,68)(35,64)
\pscurve[linewidth=1.5pt]{-}(35,64)(41,68)(44,70)
\pscurve[linewidth=1.5pt]{-}(44,70)(53,60)(64,50)
\pscurve[linewidth=1.5pt]{-}(64,50)(67,52)(70,53)
\pscurve[linewidth=0.5pt,linestyle=dashed]{-}(18,63)(28,68)(32,70)
\pscurve[linewidth=0.5pt,linestyle=dashed]{-}(32,70)(35,66)(44,57)
\pscurve[linewidth=0.5pt,linestyle=dashed]{-}(26,63)(36,68)(40,70)
\pscurve[linewidth=0.5pt,linestyle=dashed]{-}(40,70)(43,66)(52,57)
\psline{->}(10,5)(75,5)
\put(8,0){\makebox( 3, 3)[r]{$0$}}
\put(75,4){\makebox( 3, 5)[r]{$t$}}
\psline{-}(32,4)(32,6)
\put(31,0){\makebox( 3, 3)[r]{$t_{\text{prev}}$}}
\psline{-}(35,4)(35,6)
\put(35,0){\makebox( 3, 5)[r]{$t_n$}}
\psline{-}(40,4)(40,6)
\put(44,0){\makebox( 3, 5)[r]{$t_{\text{next}}$}}
\psline{-}(70,4)(70,6)
\put(68,0){\makebox( 3, 3)[r]{$T$}}
\psline{->}(10,5)(10,35)
\put(5,35){\makebox( 3, 5)[r]{$P$}}
\psline[linewidth=1.5pt,linestyle=dashed]{-}(30,10)(32,10)(32,14)(35,14)(35,18)(40,18)(40,22)(44,22)(50,22)
\psline[linewidth=1.5pt]{-}(30,10)(32,10)(32,14)(35,14)(35,10)(40,10)(40,14)(44,14)(44,18)(50,18)
\end{pspicture}
\end{center}
\caption{Illustration of TCL desynchronization algorithm. The top figure shows the temperature profile of a TCL with (thick full line) and without (thick dashed line) enforced switching at time $t = t_n$. The thin dashed lines show parts of the temperature traces of the preceding and succeeding TCLs. Their switch times are recorded by the local TCL as $t_{\text{prev}}$ and $t_{\text{next}}$, respectively. The bottom figure shows the relevant part of the corresponding power traces.}
\label{fig:period} 
\end{figure}

In short, each TCL records the switch timings of the immediately preceding and the immediately succeeding TCLs; then it sets the new enforced switch time to the mean of the switch times of the succeeding and preceding TCLs, i.e., 
\begin{equation}
  \label{eq:enforced}
  t_{k+1} = \frac{t_{\text{prev}} + t_{\text{prev}}}{2} 
\end{equation} 
where $t_{k+1} \in [0, T]$ denotes the time of the enforced switch within the following period, modulo $T$.

The enforced switch timings are initially distributed randomly; the TCL which randomly obtains the smallest value will then not be able detect a switch prior to its own. When it recognizes this fact, it sets its own switch timing to $0$ and maintains it at that value (modulo $T$); it is thus not necessary to assign a global label to the first and/or last TCL a priori.

Let $x^j_k$ denote the time of the $j$'th forced transition, modulo $T$, at iteration $k$; that is, $0 = x^1_k < x^2_k < \dots < x^N_k < T$. We gather these switch timings in a vector $\mathbf{x}_k \in [0 , T]^{N}$.
We can then prove the following asymptotic result:

\begin{thm}
If all switch timings for $i = 1,\ldots,N$ are updated according to (\ref{eq:enforced}), the globally attracting switch timing state $\mathbf{x}^*$ (modulo $T$) is described by
\edit{
\begin{align}
\mathbf{x}^*\,\, \mathop{mod}\,\, T  = \bmat 0 & \frac{T}{N}, & \frac{2T}{N}, &  \ldots & ,\frac{T(N-1)}{N}\emat^{'}.
\end{align}
}
\end{thm}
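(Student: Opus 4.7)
The plan is to recognize the update rule $t_{k+1}=(t_{\text{prev}}+t_{\text{next}})/2$ as an affine linear iteration on $\real^N$ and then establish global attraction by a spectral-radius argument. First I would write the dynamics explicitly. By construction $x^1_k\equiv 0$ for every $k$, since the TCL whose initial random enforced-switch value is smallest anchors itself at the origin modulo $T$. For the interior indices $j=2,\ldots,N-1$ the rule reads $x^j_{k+1}=\tfrac12(x^{j-1}_k+x^{j+1}_k)$. For $j=N$ the successor wraps around the period, so its switch time on the absolute clock is $x^1+T=T$, giving $x^N_{k+1}=\tfrac12(x^{N-1}_k+T)$. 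This wrap-around is the only affine (non-homogeneous) piece of the recursion, and it is what pins the equilibrium away from the trivial all-zero state.

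Second, I would locate the fixed point. Setting all updates stationary reduces to the discrete Laplace equation $2x^{*,j}=x^{*,j-1}+x^{*,j+1}$ with Dirichlet boundary $x^{*,1}=0$ and virtual endpoint $x^{*,N+1}:=T$. Its unique solution is the evenly spaced configuration $x^{*,j}=(j-1)T/N$, which matches the $\mathbf{x}^*$ in the statement.

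Third, I would introduce error coordinates $e^j_k:=x^j_k-(j-1)T/N$ and stack them as $\mathbf{e}_k=(e^2_k,\ldots,e^N_k)^\top\in\real^{N-1}$. Subtracting the stationary equations from the update rules cancels the additive $T$, yielding the homogeneous linear recursion $\mathbf{e}_{k+1}=A\mathbf{e}_k$, where $A\in\real^{(N-1)\times(N-1)}$ is the symmetric tridiagonal matrix with zero diagonal and $1/2$ on both off-diagonals. The eigenvalues of this classical Toeplitz matrix are $\cos(j\pi/N)$ for $j=1,\ldots,N-1$, so the spectral radius equals $\cos(\pi/N)<1$ for every $N\ge 2$. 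Hence $\mathbf{e}_k\to 0$ geometrically from any initial condition, and $\mathbf{x}^*$ is globally attracting.

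The main technical point, rather than a genuine obstacle, is installing the boundary conditions correctly: treating $x^1=0$ as a fixed Dirichlet boundary and interpreting the successor of $x^N$ as $T$ rather than $0$ in the wrap-around. One should also briefly verify that the strict ordering $0<x^2_k<\cdots<x^N_k<T$ is preserved under the iteration, so that the labels \emph{prev} and \emph{next} remain well defined at every step; this is immediate because each $x^j_{k+1}$ lies strictly between $x^{j-1}_k$ and $x^{j+1}_k$, and a short telescoping comparison of successive gaps rules out any reordering.
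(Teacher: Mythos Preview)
Your argument is correct and takes a genuinely different route from the paper. The paper lifts the iteration to an $(N{+}1)$-dimensional vector $\tilde{\mathbf x}$ by appending the wrapped value $x^1+T$ as a fictitious last coordinate, obtains a linear map $\tilde{\mathbf x}_{k+1}=\Gamma\tilde{\mathbf x}_k$ with two unit eigenvalues, exhibits the corresponding eigenvectors $\gamma$ and $\mathbf 1-\gamma$, and then argues that $\lim_{k\to\infty}\Gamma^k$ projects any initial condition onto the affine line determined by the fixed first and last entries $0$ and $T$. You instead eliminate the two pinned coordinates up front, pass to error variables about the candidate equilibrium, and obtain a homogeneous $(N{-}1)$-dimensional recursion governed by the tridiagonal Toeplitz matrix with zero diagonal and off-diagonals $1/2$, whose eigenvalues $\cos(j\pi/N)$ you quote explicitly. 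Your reduction is tidier and yields the exact contraction rate $\cos(\pi/N)$, which the paper's eigendecomposition of $\Gamma$ leaves implicit; the paper's lifting, on the other hand, makes the role of the boundary data ($0$ and $T$) structurally visible as the two neutral directions of $\Gamma$. Your added remark on order preservation---that each new gap $x^{j+1}_{k+1}-x^{j}_{k+1}=\tfrac12\bigl((x^{j+2}_k-x^{j+1}_k)+(x^{j}_k-x^{j-1}_k)\bigr)>0$---is a point the paper does not address but which is needed for the labels $t_{\text{prev}}$ and $t_{\text{next}}$ to remain consistent across iterations.
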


\begin{proof}
The algorithm described above for the evolution of the state (time) dynamics requires that each state be the mean value of the 
prior and next state with the caveat that the $N$'th element should update its value based on the next pulse i.e. the value of the first state element in the next time instant. Hence, we first extend the state vector by placing the new value of the first state into the fictitious $N+1$ th dimension of  the state vector $\mathbf{x}$ to generate a new state vector,
%
denoted $\tilde{\mathbf{x}}$. The update dynamics of the enforced switch times can be written as
\begin{eqnarray}
\tilde{\mathbf{x}}_{k+1}  &=&  
  \bmat 1 & 0 & 0 & 0 & \dots & 0 & 0 & 0 \\
        \frac{1}{2} & 0 & \frac{1}{2} & 0 & \dots & 0 & 0 & 0 \\ 
        0 & \frac{1}{2} & 0 & \frac{1}{2} & \dots & 0 & 0 & 0 \\ 
        \vdots & & \ddots & \ddots & \ddots & \vdots & \vdots \\
        0 & 0 & 0 & 0 & \dots & \frac{1}{2} & 0 & \frac{1}{2} \\
        0 & 0 & 0 & 0 & \dots & 0 & 0 & 1 \emat \tilde{\mathbf{x}}_k \nonumber \\
  \label{eq:dynamics}
        &=& \Gamma \tilde{\mathbf{x}}_{k}.
\end{eqnarray}
which implies that
\begin{equation*}
  \lim_{k\to\infty} \tilde{\mathbf{x}}_{k} =   \lim_{k\to\infty} \Gamma^k \tilde{\mathbf{x}}_{0} 
\end{equation*}

Using eigenvalue decomposition, we may write the aforementioned limit as  
\begin{align}
\lim_{k \to \infty} \tilde{\mathbf{x}}_k 
= \lim_{k \rightarrow \infty}  E \Lambda^{k} E^{-1} \tilde{\mathbf{x}}_{0} \label{eqnnameGammainfexpr}
\end{align}
where $\Lambda\in\real^{(N+1) \times (N+1)}$ is a diagonal matrix of eigenvalues of $\Gamma$ and $E\in\real^{(N+1)\times (N+1)}$ contains the corresponding eigenvectors.  
$\Gamma$ has two repeated unit eigenvalues with linearly independent  eigenvectors $\gamma$ and $\mathbf{1}-\gamma$, where $\mathbf{1} \in\real^{N+1}$ is a vector of one-elements and 
%
%
\begin{align}
  \gamma = \bmat 1 & 1-\frac{1}{N} & 1-\frac{2}{N} & \dots & 0 \emat^T
\end{align}
The remaining eigenvalues are of absolute magnitude less than $1$, implying that all except two of the diagonal elements in the eigenvalue matrix $ \Lambda^{k}$ tend to zero as $k\to\infty$. Hence, in order to obtain the limiting value of the state dynamics, it remains only to utilize the eigenvectors corresponding to the unit eigenvalues to obtain the limit in \eqref{eqnnameGammainfexpr}.  

Partition the eigenvector matrices as follows
\begin{equation*}
E = \bmat e_1 & e_2 & \cdots & e_{N+1}\emat, \quad
E^{-1} = \bmat \tilde{e}_1 & \tilde{e}_2 & \cdots & \tilde{e}_{N+1} \emat^T    
\end{equation*}
and let $e_1$ and $e_{N+1}$ denote the eigenvectors corresponding to the unit eigenvalues, i.e., $e_1 = \gamma$, $e_{N+1} = \mathbf{1}-\gamma$.
It is now possible to verify from the structure of $\Gamma$ and the partitioning of $E$ and $E^{-1}$ that  
\begin{align}
\tilde{e}_1^T =  \left[\begin{array}{cccc}1 & 0 & \cdots & 0\end{array}\right],\quad
\tilde{e}_{N+1}^T =   \left[\begin{array}{cccc}0 & 0 & \cdots & 1\end{array}\right],
\end{align}
%

Finally, substituting $e_1 = \gamma$, $e_{N+1} = \mathbf{1}-\gamma$ into \eqref{eqnnameGammainfexpr} we get
\begin{align}
\lim_{k \rightarrow \infty} \Gamma^k = \bmat \gamma & 0 & \cdots & 0 & \mathbf{1}-\gamma \emat.
\end{align}
Thus it is seen that the asymptotic limit of the state is dependent only on the first and last elements of the  state vector $\tilde{x}$. As these are fixed at $0$ and $T$ respectively, the  statement of the theorem follows as desired.
\end{proof}
It is of interest to note that if $\Gamma$ had been a positive (or irreducible) matrix, the  Perron-Frobenuis theorem would have been an elegant approach to obtain the steady state matrix directly. However, in this case, owing to the zero entires and the lack of strong connectivity of the graph associated with $\Gamma$ (a necessary and sufficient condition for irreducibility), the above more involved proof is required.

Finally, we state the full desynchronization algorithm as Algorithm 1, where we use the notation $\mathop{rand} \in [0,T]$ to denote the operation of picking a random value from a uniform distribution over the interval $[0,T]$.

\begin{algorithm}[h!]
  \KwData{Broadcast signal indicating a setpoint change $\delta$; Initial operation setpoint $\theta_{\text{sp}}$ and interval width $\Delta$; initial on/off status $s\in\{0,1\}$; on/off period time $T$; constant $a > 0$}   
  \KwResult{$t_k$ yielding maximal desynchronization for $k \to\infty$}
  \Begin{
    $\theta_{\text{sp}} \longleftarrow \theta_{\text{sp}} + \delta$ \;
    $t_0 \longleftarrow \mathop{rand} \in [0,T]$ \;
    $\alpha \longleftarrow \mathop{rand} \in [0,\Delta/2]$ \;
    $k \longleftarrow 0$\;
    \While{$k < \infty$}{
      \For{$0 < t < T$}{
        Measure $\theta(t)$ \;  
        \If{$t < t_{k}$}{
          $t_{\text{prev}} \longleftarrow$ time of last transition before $t_n$ \;
        }
        \If{$t = t_{k}$}{
          \If{$s = 0$}{
            $s \longleftarrow 1$ \;
          } \Else {
            $s \longleftarrow 0$ \;
          }
        }
        \If{$t > t_{k}$}{
          $t_{\text{prev}} \longleftarrow$ time of first transition after $t_n$ \;
        }

        \If{$\theta(t) \leq \theta_{\text{sp}} - \frac{\Delta}{2} + \alpha$} {
          $s_i(t) \longleftarrow 0$ \;
        }
        \ElseIf{$\theta(t) \geq \theta_{\text{sp}} + \frac{\Delta}{2} - \alpha$} {
          $s_i(t) \longleftarrow 1$ \;
        }
        \If{$s = 1$}{
          Turn cooling power on \;
        }      
        $\alpha \longleftarrow \alpha e^{-a\epsilon}$ \; 
        $t \longleftarrow t + \epsilon$ \;
      }
      $t_{k+1} \longleftarrow (t_{\text{prev}} + t_{\text{next}})/2$ \;
      $k \longleftarrow k + 1$ \;
      $t \longleftarrow 0$ \;
    }
  }
  \caption{Desynchronize Cooling TCL}
\end{algorithm}

\section{Simulation examples}
\label{sec:example}
Armed with Algorithm 1, we re-visit the simulation examples in Section \ref{sec:problem}. Figure \ref{fig:ex1} shows the temperature curves of a subset of a population of 10,000 identical TCLs, simulated under the same conditions as above, i.e., $R = 2 ^\circ$C/kW, $C = 5$ kWh$/^\circ$C, $P = 14$ kW, $\eta = 2.5$, $\theta_{\text{sp}} = 20^\circ$C, $\Delta = 1^\circ$C and $\theta_{\infty} = 28^\circ$C. The top subplot shows the behavior without desynchronization, while the bottom plot shows the behavior with Algorithm 1 applied. The temperature curves clearly look more `jumbled' after the step, which indicates a greater degree of desynchronization, and the operation intervals can be seen to return to their original size after in the space of a few hours.   

\begin{figure}
\begin{center}
\includegraphics[width=0.8\linewidth]{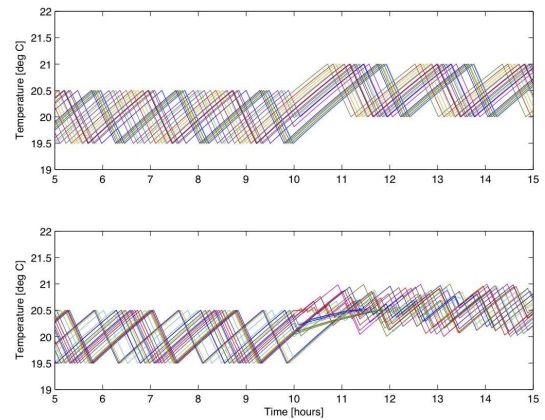}
\caption{Simulation with 10,000 identical TCLs subjected to a step change at time $t=10$ h. Top: no desynchronization; bottom: with synchronization}
\label{fig:ex1} 
\end{center}
\end{figure}

Figure \ref{fig:ex2} shows the corresponding power curves. The effect of the desynchronization is very obvious almost immediately after the step, as the amplitude of already the first peak is less than the case without desynchronization, and most of the oscillations are suppressed after about one period. We also notice that there are some oscillations remaining, which will gradually die out due to the long-term behavior of the adaptive algorithm.   
\begin{figure}
\begin{center}
\includegraphics[width=0.7\linewidth]{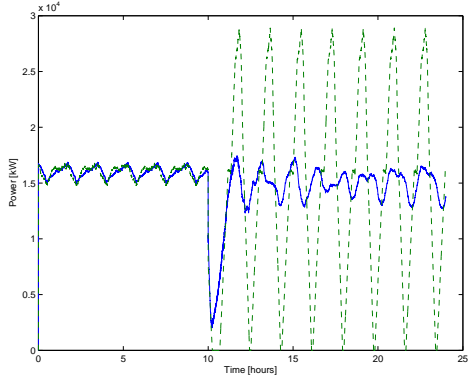}
\caption{Simulation with 10,000 identical TCLs subjected to a step change at time $t=10$ h. Full line: with desynchronization; dashed: without synchronization}
\label{fig:ex2} 
\end{center}
\end{figure}

Figure \ref{fig:ex3} shows a simulation with a heterogenous population of TCLs. Here, as opposed to Figure \ref{fig:ex2}, the parasitic oscillations die out in the case without desynchronization as well. However, Algorithm 1 clearly speeds up the process, however. Note that the oscillations that can be seen before the setpoint change is due to the initialization of the TCL on/off states, which does not match exactly with the steady-state distribution (the desynchronization is not active before $t = 10$ h). Note also that, by changing the setpoint of all the TCLs in the simulation by half a degree, it was possible to reduce the mean total power consumption by approximately 2 MW after the step.
\begin{figure}
\begin{center}
\includegraphics[width=0.7\linewidth]{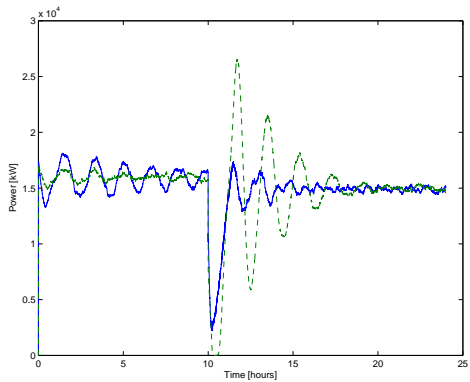}
\caption{Simulation with 10,000 inhomogeneous TCLs subjected to a step change at time $t = 10$ h.  Full line: with desynchronization; dashed: without synchronization}
\label{fig:ex3} 
\end{center}
\end{figure}

\section{Discussion} \label{sec:disc}
This paper presented a novel algorithm for counter-acting unwanted oscillations caused by synchronization of populations of temperature controlled loads, which requires neither central management of the individual units nor communication between units. The algorithm comprises two main part, a `fast' randomization of the temperature bands within which the TCLs operate, and a `slow' adaptive adjustment of enforced switch timings, which maximizes desynchronization over time. We presented a formal proof of convergence of homogeneous populations to the desynchronized status, as well as simulations that indicate that the algorithm is able to effectively dampen power consumption oscillations for both homogeneous and heterogeneous populations. 

However, the simulations also indicate that even with active desynchronization, it is hard to avoid large peaks right after a common broadcast setpoint step. Thus, high-level model-based control along the lines of the work presented in \cite{Kundu:2011}, will probably be preferable to simple steps.

\bibliographystyle{unsrt}

\begin{thebibliography}{13}
\providecommand{\natexlab}[1]{#1}
\providecommand{\url}[1]{\texttt{#1}}
\expandafter\ifx\csname urlstyle\endcsname\relax
  \providecommand{\doi}[1]{doi: #1}\else
  \providecommand{\doi}{doi: \begingroup \urlstyle{rm}\Url}\fi

\bibitem[Bashash and Fathy(2012)]{Bashash:2012}
S.~Bashash and H.K. Fathy.
\newblock Modeling and control of aggregate air conditioning loads for robust
  renewable power management.
\newblock \emph{IEEE Transactions on Control Systems Technology}, page to
  appear, 2012.

\bibitem[Callaway(2009)]{Callaway:2009}
D.S. Callaway.
\newblock Tapping the energy storage potential in electric loads to deliver
  load following and regulation, with application to wind energy.
\newblock \emph{Energy Conversion and Management}, 50\penalty0 (9):\penalty0
  1389--1400, 2009.

\bibitem[Klobasa(2010)]{Klobasa:2010}
M.~Klobasa.
\newblock Analysis of demand response and wind integration in {G}ermany's
  electricity market.
\newblock \emph{IET Renewable Power Generation}, 4\penalty0 (1):\penalty0
  55--63, 2010.

\bibitem[Kundu and Sinitsyn(2012)]{Kundu:2012}
S.~Kundu and N.~Sinitsyn.
\newblock Safe protocol for controlling power consumption by a heterogeneous
  population of loads.
\newblock In \emph{Proc. of 2012 American Control Conference}, June 2012.

\bibitem[Kundu et~al.(2011)Kundu, Sinitsyn, Backhaus, and Hiskens]{Kundu:2011}
S.~Kundu, N.~Sinitsyn, S.~Backhaus, and I.~A. Hiskens.
\newblock Modeling and control of thermostatically-controlled-loads.
\newblock In \emph{Proc. of the 17th Power Systems Computations Conference},
  2011.

\bibitem[Malhame and Chong(1985)]{Malhame:1985}
R.P. Malhame and C.Y. Chong.
\newblock Electric load model synthesis by diffusion approximation of a
  higher-order hybrid-state stochastic system.
\newblock \emph{IEEE Transactions on Automatic Control}, 30\penalty0
  (9):\penalty0 854--860, 1985.

\bibitem[Mets et~al.(2010)Mets, Verschueren, Haerick, Develder, and
  Turck]{Mets:2010}
K.~Mets, T.~Verschueren, W.~Haerick, C.~Develder, and F.~De Turck.
\newblock Optimizing smart energy control strategies for plug-in hybrid
  electric vehicle charging.
\newblock In \emph{Network Operations and Management Symposium Workshops},
  pages 293--299. IEEE, 2010.

\bibitem[Mohsenian-Rad et~al.(2010)Mohsenian-Rad, v.W.S. Wong, Jatskevich,
  Schober, and Leon-Garcia]{MohsenianRad:2010}
A.~Mohsenian-Rad, v.W.S. Wong, J.~Jatskevich, R.~Schober, and A.~Leon-Garcia.
\newblock Autonomous demand-side management based on game-theoretic energy
  consumption scheduling for the future smart grid.
\newblock \emph{IEEE Transactions on Smart Grid}, 1\penalty0 (3):\penalty0
  320--331, 2010.

\bibitem[Moslehi and Kumar(2010)]{Moslehi:2010}
K.~Moslehi and R.~Kumar.
\newblock A reliability perspective of the smart grid.
\newblock \emph{IEEE Transactions on Smart Grid}, 1\penalty0 (1):\penalty0
  57--64, 2010.

\bibitem[Perfumo et~al.(2012)Perfumo, Kofman, Braslavsky, and
  Ward]{Perfumo:2012}
C.~Perfumo, E.~Kofman, J.H. Braslavsky, and J.K. Ward.
\newblock Load management: Model-based control of aggregate power for
  populations of thermostatically controlled loads.
\newblock \emph{Energy Conversion and Management}, 55\penalty0 (1):\penalty0
  36--48, 2012.

\bibitem[Petersen et~al.(2012)Petersen, Bendtsen, and Stoustrup]{Petersen:2012}
M.~Petersen, J.~Bendtsen, and J.~Stoustrup.
\newblock Optimal dispatch strategy for the agile virtual power plant imbalance
  compensation problem.
\newblock In \emph{Proc. of American Control Conference}, Montreal, Canada,
  June 2012.

\bibitem[Short et~al.(2007)Short, Infield, and Freris]{Short:2007}
J.~Short, D.~G. Infield, and L.~L. Freris.
\newblock Stabilization of grid frequency through dynamic demand control.
\newblock \emph{IEEE Transactions on Power Systems}, 22\penalty0 (3):\penalty0
  1284--1293, 2007.

\bibitem[Strbac(2008)]{Strbac:2008}
G.~Strbac.
\newblock Demand side management: Benefits and challenges.
\newblock \emph{Energy Policy}, 36\penalty0 (12):\penalty0 4419--4426, 2008.

\end{thebibliography}


\end{document}